\newtheorem{mainthm}{Theorem}
\newtheorem{theorem}{Theorem}[section]
\newtheorem{definition}[theorem]{Definition}
\newtheorem{lemma}[theorem]{Lemma}
\newtheorem{proposition}[theorem]{Proposition}
\newtheorem{corollary}[theorem]{Corollary}
\newtheorem{remark}[theorem]{Remark}
\newtheorem{examplecore}[theorem]{Example}
\newenvironment{example}{\begin{examplecore}}{\hspace*{\fill}
$\square$\par\vspace{.1cm}\end{examplecore}}
\newcommand{\op}{\operatorname}
\newcommand{\Ao}{\ensuremath{\mathbb{A}^1}}
\newcommand{\Ga}{\ensuremath{\mathbb{G}_{\op{a}}}}
\newcommand{\Gm}{\ensuremath{\mathbb{G}_{\op{m}}}}
\newcommand{\ZZ}{\ensuremath{\mathbb{Z}}}
\newcommand{\St}{\ensuremath{\op{St}}}
\newcommand{\Sing}{\ensuremath{\op{Sing}_\bullet^{\Ao}}}
\newcommand{\exinf}{\ensuremath{\op{Ex}^\infty_{\Ao}}}
\newcommand{\epito}{\ensuremath{\twoheadrightarrow}}
\newcommand{\monoto}{\ensuremath{\hookrightarrow}}
\newcommand{\isoto}{\ensuremath{\stackrel{\sim}{\longrightarrow}}}
\newcommand{\Ker}{\ensuremath{\op{ker}}}
\begin{document}

\title{On $\Ao$-fundamental groups of isotropic reductive groups}
%\ftitle{Sur les groupes $\Ao$-fondamental des groupes r\'{e}ductifs isotropes}

\author{Konrad Voelkel and Matthias Wendt}

\date{October 2015}

\address{Konrad Voelkel, Mathematisches Institut, Albert-Ludwigs-Uni\-ver\-si\-t\"at Freiburg, Eckerstra\ss{}e 1, 79104, Freiburg im Breisgau, Germany}
\email{konrad.voelkel@math.uni-freiburg.de}
\thanks{Konrad Voelkel acknowledges support by the DFG-Forschergruppe 570 ``Algebraic cycles and L-functions''.}
\address{Matthias Wendt, Fakult\"at f\"ur Mathematik, Universit\"at Duisburg-Essen, Thea-Leymann-Strasse 9, 45127 Essen}
\email{matthias.wendt@uni-due.de}

\subjclass[2010]{14F42,19C20}
\keywords{fundamental groups, algebraic groups, $K_2$, symbols, $\Ao$-homotopy}

\begin{abstract}
For an isotropic reductive group $G$ satisfying a suitable rank condition over an infinite field $k$, we show that the sections of the $\Ao$-fundamental group sheaf of $G$ over an extension field $L/k$ can be identified with the second group homology of $G(L)$. For a split group $G$, we provide explicit loops representing all elements in the $\Ao$-fundamental group. Using $\Ao$-homotopy theory, we deduce a Steinberg relation for these explicit loops.
\end{abstract}

%\begin{fabstract}
%Pour un groupe r\'{e}ductif isotrope $G$ d\'{e}fini sur un corps infini $k$, satisfaisant une condition de rang appropri\'{e}, nous montrons que les sections du $\Ao$-faisceau de groupe fondamental de $G$ sur une extension des corps $L/k$ peuvent \^{e}tre identifi\'{e} avec la deuxi\`{e}me homologie des groupes de $G(L)$. Pour un groupe d\'{e}ploy\'{e} $G$, nous d\'{e}finissons des lacets explicites representons tous les elements du groupe $\Ao$-fondamental. Utilisons la th\'{e}orie de $\Ao$-homotopie, on en d\'{e}duit une r\'{e}lation de Steinberg pour ces lacets explicites.
%\end{fabstract}

\maketitle

\section{Introduction}
\label{sec:intro}

The goal of the present note is to describe the $\Ao$-fundamental group sheaves for isotropic reductive groups, improving the computations of 
\cite[Proposition 5.2]{chev-rep}. Moreover, for split groups, we obtain more precise information on the $\Ao$-fundamental groups by providing explicit loops representing elements in the $\Ao$-fundamental groups. The precise statement of our result is the following, cf. \prettyref{lem:bgiso} and \prettyref{prop:pi1h2}:

\begin{mainthm}
Let $k$ be an infinite field and let $G$ be an isotropic reductive group over $k$, assuming that all components of the relative root system of $G$ have at least rank $2$. Then there is an isomorphism
\[
\op{H}_2(G(k),\ZZ) \cong \pi_1(G(k[\Delta^\bullet])) \cong \pi_1^{\Ao}(G)(k).
\]
In the case of split $G$, the isomorphism can be described by an explicit map
\[
K_2^{\op{M(W)}}(k) \isoto \pi_1(G(k[\Delta^\bullet])).
\]
\end{mainthm}

To prove the result, we use homotopy invariance of group homology, cf. \cite{hoinv}, and a definition of Steinberg groups based on work of Petrov and Stavrova to identify $\op{H}_2(G(k),\ZZ)$ with $\pi_1(G(k[\Delta^\bullet]))$.
The results of \cite{gbun1} and \cite{gbun2} on affine excision and descent for isotropic groups relate the latter to $\Ao$-homotopy theory. A slightly different approach is described in \prettyref{rem:earlier}. The Steinberg relation for explicit loops in $\op{H}_2(G(k),\ZZ)$ follows from arguments of Hu and Kriz. 

Using Morel's theory of strictly $\Ao$-invariant sheaves \cite{morel:a1algtop}, we also get the following: 
\begin{corollary}
Let $k$ be an infinite perfect field and let $G$ be as above. Then the assignment $L/k\mapsto \op{H}_2(G(L),\ZZ)$ extends to a strictly $\Ao$-invariant sheaf of abelian groups. 
\end{corollary}

Another implication of the above theorem is that Rehmann's computation of $\op{H}_2(\op{SL}_n(D),\ZZ)$, cf. \cite{rehmann}, can be seen as a description of $\pi_1^{\Ao}(\op{SL}_n(D))$, for $n\geq 3$. The corollary implies the existence of well-behaved residue maps on $\op{H}_2(\op{SL}_n(D),\ZZ)$ which seem to be new.

\emph{Acknowledgments:} The results presented here originate from the first named author's Diplomarbeit \cite{voelkel:diplom} at the Albert-Ludwigs-Universit\"at Freiburg.

\section{Preliminaries} 
\label{sec:prelims}

In this article, we always assume $k$ to be an infinite field. We consider reductive groups $G$ over $k$, and we assume that they are \emph{isotropic}, as in \cite{petrov:stavrova:elementary}, so that all irreducible components of the relative root system of such $G$ are of rank at least $2$. This implies that the results of \cite{petrov:stavrova:elementary} and \cite{gbun2} are applicable. 

For a commutative unital $k$-algebra $R$, the (abstract) group of $R$-points of the group scheme $G$ is denoted by $G(R)$. The elementary subgroup $\op{E}(R)\subset G(R)$ is defined, as in \cite[\S1]{petrov:stavrova:elementary}, to be the subgroup of $G(R)$ generated by $R$-points of unipotent radicals of opposite parabolics $P^+,P^-$ of $G$. By \cite[Theorem 1]{petrov:stavrova:elementary}, $\op{E}(R)$ is normal in $G(R)$, and by \cite[Theorem 1]{luzgarev:stavrova}, the group $\op{E}(R)$ is perfect. Moreover, by \cite[Theorem 1.3]{stavrova}, $K_1^G(R):=G(R)/\op{E}(R)$ is invariant under polynomial extensions. 

\begin{definition}
Let $G$ be an isotropic reductive group over a commutative ring $R$.
We define the \emph{Steinberg group} $\St^G(R)$ to be the abstract group generated by elements $\widetilde{X_A}(u)$,
$u\in V_A(R)$ subject to the commutator formulas from
\cite[Lemma 9, 10]{petrov:stavrova:elementary}. We define the group $K_2^G(R):=\Ker\left(\St^G(R)\to \op{E}^G(R)\right)$. 
\end{definition}

\begin{remark}
It is known that $K_2^G(k[\Delta^n]) \monoto \St^G(k[\Delta^n]) \epito E^G(k[\Delta^n])$ is a universal central extension for $G$ split of type $A_l, l\geq 3$ (van der Kallen), $C_l, l \geq 3$ (Lavrenov) and $E_l$ (Sinchuk). It is not even a central extension for split rank $2$ groups.
\end{remark}

Using the standard cosimplicial object given by polynomial rings, one can associate a simplicial group to the reductive group $G$ and a unital commutative $k$-algebra $A$, cf. \cite{jardine:homotopy}. This is denoted by $G(A[\Delta^\bullet])$ or (more commonly in the $\Ao$-homotopy literature) by $\Sing(G)(A)$. The $\Ao$-homotopy groups of an isotropic reductive group can be computed from the singular resolution, cf. \cite[Corollary 4.3.3]{gbun2}.

\begin{lemma}
\label{lem:bgiso}
Let $k$ be an infinite field and let $G$ be an isotropic reductive group over $k$.

Then $\Sing(G)$ has affine Nisnevich excision in the sense of \cite[Definition 3.2.1]{gbun1} and there are isomorphisms
\[
\pi_i(\Sing(G)(A)) \isoto \pi_i^{\Ao}(G)(A)
\]
for any essentially smooth $k$-algebra $A$ and any $i\geq 0$.
\end{lemma}

\begin{remark}\label{rem:earlier}
Alternatively,
one can prove affine Nisnevich excision exactly as in \cite[Theorem 4.10]{chev-rep}, using homotopy invariance for unstable $K_1^G$ of isotropic groups from \cite[Theorem 1.3]{stavrova}. The above result then follows from the general representability result \cite[Theorem 3.3.5]{gbun1}. This was the approach taken in an earlier version of the present paper (\href{http://arxiv.org/abs/1207.2364}{arXiv:1207.2364v1}), before the appearance of \cite{gbun1,gbun2}.
\end{remark}

\section{The second homology as fundamental group}
\label{sec:h2vsp1}

We now show how homotopy invariance for homology of linear groups can be used to identify the fundamental group of the singular resolution $G(k([\Delta^\bullet]))$ with the second group homology. We define for an isotropic reductive group $G$ simplicial groups $G(k[\Delta^\bullet])$, $\op{E}^G(k[\Delta^\bullet])$ and $\St^G(k[\Delta^\bullet])$ associated to the group, its elementary subgroup and its Steinberg group. The first thing to note is that homotopy invariance of $K_1^G$ implies an isomorphism $\pi_1(G(k[\Delta^\bullet]))\cong\pi_1(\op{E}(k[\Delta^\bullet]))$, which allows us to work with $\op{E}(k[\Delta^\bullet])$ henceforth. We define further simplicial objects: denote by $K_2^G(k[\Delta^\bullet])$ the singular resolution of the functor
\[
A\mapsto K_2^G(A):=\Ker\left(\St^G(A)\rightarrow \op{E}^G(A) \right),
\]
by $\op{UE}^G(k[\Delta^\bullet])$ the singular resolution of the functor $A\mapsto \op{UE}^G(A)$ which assigns to each algebra $A$ the universal central extension $\op{UE}^G(A)$ of the perfect group $E^G(A)$,
and by $\op{H}_2^G(k[\Delta^\bullet])$ the singular resolution of the functor 
\[
A\mapsto \op{H}_2^G(A):=\op{H}_2(G(A),\ZZ)=\Ker\left(\op{UE}^G(A)\rightarrow \op{E}^G(A)\right). 
\]
We chose slightly unusual notation in $\op{H}_2^G$ to distinguish the above object from $\op{H}_2(G(k[\Delta^\bullet]),\ZZ)$ which has a different meaning. 

With these notations, we have the following:

\begin{lemma}
\label{lem:fibseq}
There are fibre sequences of simplicial sets:
\[
\op{H}_2^G(k[\Delta^\bullet])\rightarrow \op{UE}^G(k[\Delta^\bullet])\rightarrow
\op{E}^G(k[\Delta^\bullet]), \textrm{ and}
\]
\[
K_2^G(k[\Delta^\bullet])\rightarrow
\St^G(k[\Delta^\bullet])\rightarrow \op{E}^G(k[\Delta^\bullet]).
\]
\end{lemma}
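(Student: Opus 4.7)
The plan is to check that these are fibre sequences levelwise, by reducing to a classical fact about simplicial groups. By the definition of the simplicial objects in question, in each simplicial degree $n$ we have short exact sequences of abstract groups
\begin{equation*}
1 \to K_2^G(k[\Delta^n]) \to \St^G(k[\Delta^n]) \to E^G(k[\Delta^n]) \to 1
\end{equation*}
and
\begin{equation*}
1 \to H_2^G(k[\Delta^n]) \to UE^G(k[\Delta^n]) \to E^G(k[\Delta^n]) \to 1,
\end{equation*}
where the first follows from the definition of $K_2^G$ as the kernel of the Steinberg surjection, and the second follows from the definition of the universal central extension of the perfect group $E^G(k[\Delta^n])$ (perfectness is provided by \prettyref{thm:luzgarev-stavrova}, so that the universal central extension exists, is functorial in the ring argument, and the kernel is by definition $H_2(E^G(k[\Delta^n]),\mathbb{Z})$, which agrees with $H_2(G(k[\Delta^n]),\mathbb{Z})$).

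The first step is therefore to assemble these degreewise short exact sequences into short exact sequences of simplicial groups. The face and degeneracy maps of $k[\Delta^\bullet]$ induce maps of abstract groups in each degree, and by functoriality of the Steinberg construction, of the universal central extension (for perfect groups) and of the elementary subgroup, these are compatible with the simplicial structure. Hence we obtain surjections of simplicial groups
\begin{equation*}
\St^G(k[\Delta^\bullet]) \twoheadrightarrow E^G(k[\Delta^\bullet]), \qquad UE^G(k[\Delta^\bullet]) \twoheadrightarrow E^G(k[\Delta^\bullet])
\end{equation*}
with kernels $K_2^G(k[\Delta^\bullet])$ and $H_2^G(k[\Delta^\bullet])$, respectively, computed degreewise.

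The second step is to invoke Moore's classical result that every surjection of simplicial groups is a Kan fibration of underlying simplicial sets, and that its fibre over the identity is the kernel. This immediately turns the two short exact sequences of simplicial groups into the desired fibre sequences of pointed simplicial sets.

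The only subtle point — and the main thing to verify — is that the universal central extension construction does behave functorially with respect to the $k$-algebra maps involved (so that $UE^G(k[\Delta^\bullet])$ is actually a simplicial group). This is where \prettyref{thm:luzgarev-stavrova} is essential: it ensures each $E^G(k[\Delta^n])$ is perfect, so the universal central extension is defined and unique up to canonical isomorphism, whence any ring homomorphism $k[\Delta^m]\to k[\Delta^n]$ lifts canonically to a map of universal central extensions. Once this functoriality is in place, both fibre sequences follow formally from Moore's theorem.
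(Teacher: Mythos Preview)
Your proof is correct and follows essentially the same approach as the paper: both invoke Moore's lemma to conclude that a surjection of simplicial groups is a Kan fibration (of fibrant simplicial sets), with fibre the degreewise kernel. You spell out the functoriality underlying the simplicial structure more explicitly than the paper does, but the argument is the same.
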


\begin{proof}
It follows from Moore's lemma, e.g. \cite[Lemma I.3.4]{goerss:jardine}, that the morphisms $\op{UE}^G(k[\Delta^\bullet]) \rightarrow \op{E}^G(k[\Delta^\bullet])$ and $\St^G(k[\Delta^\bullet]) \rightarrow \op{E}^G(k[\Delta^\bullet])$ are fibrations of fibrant simplicial sets. The fibres are by definition $\op{H}_2^G(k[\Delta^\bullet])$ and $K_2^G(k[\Delta^\bullet])$, respectively. 
\end{proof}

\begin{proposition}
\label{prop:pi1h2}
Let $k$ be an infinite field, and let $G$ be an isotropic reductive group over $k$. Then the boundary morphism $\Omega \op{E}^G(k[\Delta^\bullet])\rightarrow \op{H}_2^G(k[\Delta^\bullet])$ associated to the fibration $\op{UE}^G(k[\Delta^\bullet])\rightarrow \op{E}^G(k[\Delta^\bullet])$
induces an isomorphism:
\[
\pi_1(\op{E}^G(k[\Delta^\bullet]),1) \isoto \op{H}_2(G(k),\ZZ). 
\]
If the Steinberg group does not have non-trivial central extensions,
i.e. for all $n$
\[
\St^G(k[\Delta^n])/\left[K_2^G(k[\Delta^n]),\St^G(k[\Delta^n])\right] \rightarrow \op{E}^G(k[\Delta^n])
\]
is the universal central extension, then the boundary morphism $\Omega \op{E}^G(k[\Delta^\bullet])\rightarrow K_2^G(k[\Delta^\bullet])$ associated to the fibration $\St^G(k[\Delta^\bullet])\rightarrow \op{E}^G(k[\Delta^\bullet])$ induces an isomorphism 
\[
\pi_1(\op{E}^G(k[\Delta^\bullet]),1) \isoto K_2^G(k).
\]
\end{proposition}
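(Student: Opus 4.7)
The plan is to extract the long exact sequence of homotopy groups from the fibre sequence
$$
H_2^G(k[\Delta^\bullet])\to UE^G(k[\Delta^\bullet])\to E^G(k[\Delta^\bullet])
$$
of \prettyref{lem:fibseq}, and to feed in the homotopy invariance theorem for group homology quoted just above. Writing $\partial\colon\pi_1(E^G(k[\Delta^\bullet]))\to\pi_0(H_2^G(k[\Delta^\bullet]))$ for the connecting homomorphism and suppressing the common argument $k[\Delta^\bullet]$, the sequence reads
$$
\pi_1(H_2^G)\to\pi_1(UE^G)\to\pi_1(E^G)\stackrel{\partial}{\to}\pi_0(H_2^G)\to\pi_0(UE^G)\to\pi_0(E^G),
$$
so to force $\partial$ to be an isomorphism it suffices to establish (a) $\pi_0(H_2^G(k[\Delta^\bullet]))=H_2(G(k),\mathbb{Z})$ with $\pi_i=0$ for $i\geq 1$, (b) $\pi_0(E^G(k[\Delta^\bullet]))=\pi_0(UE^G(k[\Delta^\bullet]))=0$, and (c) $\pi_1(UE^G(k[\Delta^\bullet]))=0$.

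For (a) I would observe that every $k$-algebra map $k[\Delta^n]\to k[\Delta^m]$ restricts to the identity on $k$; combined with the isomorphism $H_2(G(k),\mathbb{Z})\xrightarrow{\sim}H_2(G(k[\Delta^n]),\mathbb{Z})$ supplied by the quoted homotopy invariance theorem, this shows that under the natural identifications every face and degeneracy of the simplicial abelian group $H_2^G(k[\Delta^\bullet])$ is the identity. Hence $H_2^G(k[\Delta^\bullet])$ is weakly equivalent to the constant simplicial abelian group on $H_2(G(k),\mathbb{Z})$.

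For (b) I would use that in any simplicial group the $0$-simplices $\Ao$-connectable to $e$ form a subgroup: the elementary root elements $X_A(u)\in E^G(k)$ are connected to $e$ via $X_A(tu)\in E^G(k[t])$, and the Steinberg generators $\widetilde X_A(u)$ are connected to $e$ via $\widetilde X_A(tu)$; since these generating sets pull through the central extensions to generating sets of $UE^G(k)$, the identity component fills out the whole group. For (c), the main step, I would pass to the classifying space $BUE^G(k[\Delta^\bullet])$ of the simplicial group: by (b) it is simply connected, and its $H_2$ vanishes because $UE^G(A)$ is perfect with trivial Schur multiplier for every commutative $k$-algebra $A$ (defining property of a universal central extension), so a standard bisimplicial spectral sequence computing the group homology of the simplicial group collapses to $H_2(BUE^G(k[\Delta^\bullet]),\mathbb{Z})=0$. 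Hurewicz then forces $\pi_2(BUE^G(k[\Delta^\bullet]))=\pi_1(UE^G(k[\Delta^\bullet]))=0$.

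With (a)--(c) in hand the long exact sequence collapses to the desired isomorphism $\pi_1(E^G(k[\Delta^\bullet]))\xrightarrow{\sim}H_2(G(k),\mathbb{Z})$; combining with \prettyref{cor:bgiso} yields the first statement on $\pi_1^{\Ao}(G)$. For the second statement, the universal-central-extension hypothesis on $\St^G(k[\Delta^n])\to E^G(k[\Delta^n])$ provides a natural levelwise identification $\St^G(k[\Delta^n])\cong UE^G(k[\Delta^n])$, hence $K_2^G(k[\Delta^\bullet])\simeq H_2^G(k[\Delta^\bullet])$, and the same argument applied to the second fibre sequence in \prettyref{lem:fibseq} gives the result. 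The main obstacle is step (c): iterating the universal-central-extension trick gains nothing since $UE^G$ is its own universal central extension, so one must genuinely combine homotopy invariance for the group homology of $UE^G$ in degree two with a Hurewicz-type argument at the level of classifying spaces.
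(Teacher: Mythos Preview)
Your proposal is correct; the difference from the paper lies in how you handle step~(c). Steps (a) and (b) match the paper's argument, modulo one small imprecision: the phrase ``generating sets pull through the central extensions to generating sets of $UE^G(k)$'' is true, but the paths $\widetilde X_A(tu)$ you invoke live in $\St^G(k[t])$, and there is no obvious map $\St^G\to UE^G$ in general. A cleaner justification is that $\pi_0(UE^G(k[\Delta^\bullet]))$ is simultaneously a quotient of the perfect group $UE^G(k)$ and, via the long exact sequence, a quotient of the abelian group $\pi_0(H_2^G)$, hence trivial. (The paper is equally terse at this point.)

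For (c) the paper takes a different route. Rather than proving $\pi_1(UE^G(k[\Delta^\bullet]))=0$ directly, it constructs the universal cover $\widetilde{E^G}(k[\Delta^\bullet])\to E^G(k[\Delta^\bullet])$ as a simplicial group, observes that this is degree-wise a central extension of $E^G(k[\Delta^n])$ by the constant group $\pi_1(E^G(k[\Delta^\bullet]))$, and then invokes the universal property of $UE^G(k[\Delta^n])$ to obtain a factorisation $UE^G(k[\Delta^\bullet])\to\widetilde{E^G}(k[\Delta^\bullet])\to E^G(k[\Delta^\bullet])$. Since the cover has $\pi_1=0$, the map $\pi_1(UE^G)\to\pi_1(E^G)$ is zero; combined with its injectivity (from $\pi_1(H_2^G)=0$) this forces $\pi_1(UE^G)=0$ a posteriori. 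Your approach instead feeds the levelwise vanishing $H_1(UE^G(A))=H_2(UE^G(A))=0$ into the bisimplicial spectral sequence for $BUE^G(k[\Delta^\bullet])$, so that the $E^1$-rows in degrees $q=1,2$ vanish, $H_2(BUE^G)=0$, and Hurewicz finishes. The paper's argument is a pure covering-space manoeuvre using only the universal property of $UE^G$ once per simplicial degree; yours is homological and makes explicit that nothing beyond the defining acyclicity of a universal central extension is needed. Both are short and self-contained.
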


\begin{proof}
By \cite[Theorem 1.1]{hoinv}, all the usual maps (inclusion of constants, evaluation at $0$) induce isomorphisms $\op{H}_2(G(k),\ZZ)\cong \op{H}_2(G(k[T]),\ZZ)$.
Therefore, we have
\[
\pi_0(\op{H}_2^G(k[\Delta^\bullet]))=\op{H}_2(G(k),\ZZ), \textrm{ and }
\pi_1(\op{H}_2^G(k[\Delta^\bullet]))=0.
\]
Moreover, $\op{E}^G(k)$ and $\St^G(k)$ are generated by $X_\alpha(u)$, $u\in V_\alpha$. These elements are all homotopic to the identity by the homotopy $X_\alpha(uT)$. Therefore, 
\[
\pi_0(\op{E}^G(k[\Delta^\bullet]))\cong \pi_0(\St^G(k[\Delta^\bullet]))=0.
\]
The long exact sequence associated to the fibre sequence from \prettyref{lem:fibseq} yields via the above computations a short exact sequence
\[
0\rightarrow \pi_1(\op{UE}^G(k[\Delta^\bullet]))\rightarrow
\pi_1(\op{E}^G(k[\Delta^\bullet])) \rightarrow
\pi_0(\op{H}_2^G(k[\Delta^\bullet]))\rightarrow 0.
\]
Now let $\widetilde{\op{E}^G}(k[\Delta^\bullet]) \rightarrow \op{E}^G(k[\Delta^\bullet])$
be the universal covering of the simplicial group $\op{E}^G(k[\Delta^\bullet])$.
This has the structure of a simplicial group, and by uniqueness of liftings is degree-wise a central extension by $\pi_1(\op{E}^G(k[\Delta^\bullet]))$. Therefore, the above injective map factors as $\op{UE}^G(k[\Delta^\bullet])\to \widetilde{\op{E}^G}(k[\Delta^\bullet])\to \op{E}^G(k[\Delta^\bullet])$, 
%Therefore, there is a factorization
%\vspace{-0.5cm}
%\begin{center}
%\begin{minipage}[c]{10cm}
%\begin{tikzpicture}[scale=1.2]
%\node (Stmod) at (2,0) {$UE^G(k[\Delta^\bullet])$};
%\node (Et) at (5,0) {$\widetilde{E^G}(k[\Delta^\bullet])$};
%\node (E) at (7,0) {$E^G(k[\Delta^\bullet])$};
%\draw[arrows=->,dashed] (Stmod) -- (Et);
%\draw[arrows=->>] (Stmod) [controls=+(30:1) and +(120:1)] to (E.north);
%\draw[arrows=->>] (Et) -- (E);
%\end{tikzpicture}
%\end{minipage}
%\end{center}
which together with $\pi_1(\widetilde{E^G}(k[\Delta^\bullet]))=0$ 
implies the required isomorphism.
%that the morphism 
%\[
%\pi_1(UE^G(k[\Delta^\bullet]))\rightarrow
%\pi_1(E^G(k[\Delta^\bullet]))
%\]
%is trivial.
%The above short exact sequence implies the required isomorphism. 

The second claim concerning $K_2$ follows by the same argument,
replacing $\op{UE}^G$ by
\[
\St^G(k[\Delta^n])/[K_2^G(k[\Delta^n]),\St^G(k[\Delta^n])]. \qedhere
\]
\end{proof}

\begin{remark}
It should be noted that the isomorphism in \prettyref{prop:pi1h2} has been established in the case of Chevalley groups over algebraically closed fields in \cite[Theorem 2.1]{jardine:homotopy}.
Jardine's proof uses the spectral sequence for the homology of $G(k[\Delta^\bullet])$ to establish this isomorphism.
This is not too far away from our proof above, however, there are better methods available now to establish the necessary $\Ao$-invariance of $H_2$.
\end{remark}

\section{Explicit description of loops and relations}
\label{sec:explicit}

Fix a root system $\Phi$. For a commutative unital ring $R$ denote $G(\Phi,R)$ the split Chevalley group, $\op{E}(\Phi,R)$ its elementary subgroup and $\op{St}(\Phi,R)$ its Steinberg group. We now describe explicit loops in $\pi_1(G(\Phi,k[\Delta^\bullet]))$, which is a direct translation of the Steinberg symbols for $\op{H}_2$. This also gives rise to an explicit isomorphism
$\op{H}_2(G(\Phi,k),\ZZ)\isoto \pi_1(G(\Phi,k[\Delta^\bullet]),1)$

\begin{definition}
\label{def:exploops}
For every $\alpha \in \Phi$ we denote by $x_\alpha(u)$ the corresponding root group elements and then define morphisms 
\begin{align*}
& X^\alpha \colon \Ga(R) \to \op{E}(\Phi,R[T]),\ & R \ni u        & \mapsto X^\alpha_T(u) := x_\alpha(Tu),\\ 
& W^\alpha \colon \Gm(R) \to \op{E}(\Phi,R[T]),\ & R^\times \ni u & \mapsto W^\alpha_T(u) := X^\alpha_T(u)X^{-\alpha}_T(-u^{-1})X^\alpha_T(u),\\ 
& H^\alpha \colon \Gm(R) \to \op{E}(\Phi,R[T]),\ & R^\times \ni u & \mapsto H^\alpha_T(u) := W^\alpha_T(u)W^\alpha_T(1)^{-1},
\end{align*}
\begin{align*}
& C^\alpha \colon \Gm\times\Gm \to \op{E}(\Phi,R[T]), \\
& \qquad R^\times \times R^\times \ni (a,b) \mapsto C^\alpha_T(a,b) := H^\alpha_T(a)H^\alpha_T(b)H^\alpha_T(ab)^{-1} \in \op{E}(\Phi,R[T]). 
\end{align*}
We will use the same letters with an additional tilde to denote the corresponding lifts to $\St(\Phi,R[\Delta^\bullet])$.
%, for example
%$\tilde{C}^\alpha : 
%\Gm\times\Gm \to \St(\Phi,R[\Delta^\bullet])$ is the map 
%\[
%(a,b) \mapsto \tilde{C}^\alpha_T(a,b) :=
%\tilde{H}^\alpha_T(a)\tilde{H}^\alpha_T(b)\tilde{H}^\alpha_T(ab)^{-1}
%\in \St(\Phi,R[T]).
%\] 
\end{definition}

\begin{example}
We give an example of the ``symbol loops'' in the group $SL_2$.
With the obvious choice $x_\alpha(u)=e_{12}(u)$,
we have
\[
C^\alpha_T(u,v)=\left(\begin{array}{cc}
1&0\\0&1\end{array}\right)+
T(T^2-1)\frac{(1-u)(1-v)}{u^2v}D_T^\alpha(u,v), 
\text{ where}
\]
\[D_T^\alpha(u,v)=
\left(\begin{array}{cc}
u(1-u)T(T^2-1)(T^2-2)&
-vu^2((T^2-1)^2(1-u)+u)(T^2-2)\\
(1-u)(T^2-1)^2-1&-uv(1-u)T(T^2-1)(T^2-2)
\end{array}\right) \qedhere
\]
\end{example}

\begin{remark}
Philosophically, what is happening here is the following: choosing a maximal torus $S$ in $G$, associated root system and root subgroups $x_\alpha$ allows to write down a contraction of the (elementary part of the) torus, i.e. a homotopy $H \colon S\times\Ao\rightarrow G$, where $H(-,0)$ factors through the identity $1\in G$ and $H(-,1)$ is the inclusion of $S$ as maximal torus of $G$. This is nothing but a more elaborate version of the lemma of Whitehead. After fixing such a contraction, there is a preferred choice of path $H(u)$ for any $u\in S$. Given two units in the torus, one can concatenate the paths $H(u)$, $uH(v)$ and $H(uv)^{-1}$ to obtain a loop. This is basically what happens in \prettyref{def:exploops}.
\end{remark}

%\begin{example}
The translation between elements (and symbols) in the Steinberg group and loops (and symbol loops) in the singular resolution $G(\Phi,k[\Delta^\bullet])$ is given as in covering space theory:

(i) An element of the Steinberg group is given by a product $\tilde{y}=\prod_i \widetilde{x_{\alpha_i}}(u_i)$. Setting $y_T=\prod_i x_\alpha(Tu_i)$ produces a path in $\op{E}(\Phi,R[T])$. If $\tilde{y}$ is in the kernel of the projection $\St(\Phi,R) \rightarrow \op{E}(\Phi,R)$, the path $y_T$ is in fact a loop.

(ii) A path $y_T\in \op{E}(\Phi,k[T])$ with $y_T(0)=1$ can be factored as a product of elementary matrices $\prod_i x_{\alpha_i}(f_i(T))$, which in turn can be lifted to $\St(\Phi,k[T])$. Evaluating at $T=1$ yields an element $\prod_i\widetilde{x_{\alpha_i}}(f_i(1))\in \St(\Phi,R)$. If the path $y_T$ was in fact a loop, then the resulting element $\prod_i\widetilde{x_{\alpha_i}}(f_i(1))\in \St(\Phi,R)$ lies in fact in the kernel of the projection $\St(\Phi,R)\rightarrow \op{E}(\Phi,R)$.
%\end{example}

It is then possible to derive elementary relations between the above loops in just the same way as the relations for Steinberg symbols in
\cite{matsumoto}. 
%\begin{example}
The contraction of the torus $H^\alpha_T(u)$ is chosen such that $H^\alpha_T(1)$ is the constant loop.
From this, it follows immediately that
$C^\alpha_T(x,1)=C^\alpha_T(1,x)=1$
for all $x,y\in k^\times$.
The symbol loops $C^\alpha_T(x,y)$ in $G(\Phi,k[T])$
are not central on the nose,
but are central up to homotopy because the fundamental group of a simplicial group is abelian,
and conjugation by paths acts trivially on the fundamental group.
Then the conjugation formulas in \cite[Lemma 5.2]{matsumoto}
can be translated into statements of homotopies between corresponding products of paths
$W^\alpha_T(u)$ resp. $H^\alpha_T(u)$.
In particular, (weak) bilinearity of symbol loops in the fundamental group can be proved exactly as in \cite{matsumoto}.
For details,
cf. \cite{voelkel:diplom}. 
%\end{example}
It is not clear how to prove the Steinberg relation simply by computing with loops and homotopies inside $\op{E}(k[\Delta^\bullet])$. We derive a general Steinberg relation from $\Ao$-homotopy theory in the next section. 

\section{The Steinberg relation from \texorpdfstring{$\Ao$}{A1}-homotopy theory}
\label{sec:steinberg}

In the case of split groups, the Steinberg relation in $\op{H}_2(G(k),\ZZ)$ can be deduced from $\Ao$-homotopy as follows. We denote by $\Sigma$ and $\Omega$ the simplicial suspension and loop space functors, respectively. 

\begin{proposition}
\label{prop:steinberg}
 Let $C \colon \Gm \wedge \Gm \to \Omega G_\bullet$ be any morphism with $G_\bullet$ a simplicial group satisfying affine Nisnevich excision. Let $s \colon \Ao\setminus\{0,1\} \to \Gm \wedge \Gm$ be the Steinberg morphism $a \mapsto (a,1-a)$. Then the composition of $C$ with the Steinberg morphism $C\circ s \colon \Ao\setminus\{0,1\} \to \Omega G_\bullet$ has trivial homotopy class in the simplicial and $\Ao$-local homotopy category.
\end{proposition}

\begin{proof}
We have the natural adjunction $[\Sigma X,Y]\cong[X,\Omega Y]$ both in the simplicial and $\Ao$-local homotopy category. 
Choose a fibrant resolution $r\colon G_\bullet\rightarrow \exinf(G_\bullet)$.
Under the adjunction, the morphism $r\circ C\circ s$ corresponds to the composition
\[
\Sigma \Ao\setminus\{0,1\}\stackrel{\Sigma s}{\longrightarrow}
\Sigma\Gm\wedge\Gm \stackrel{C^{ad}}{\longrightarrow}
\exinf(G_\bullet).
\]
By \cite[Prop. 1]{hu:kriz},
this composition factors through the $\Ao$-contractible space $\Sigma\Ao$ and is therefore trivial.
More specifically,
we have the following equality in
$[\Sigma\Ao\setminus\{0,1\},\exinf(G_\bullet)]_{\Ao}$: 
\[
r\circ C^{ad}\circ \Sigma s=r\circ C^{ad}\circ \Sigma\tilde{s}\circ
\Sigma\iota=r\circ C^{ad}\circ 0=0.
\]
This implies the $\Ao$-local statement. The simplicial statement follows from \prettyref{lem:bgiso} and \cite[Theorem 3.3.5]{gbun1}, which give a bijection
\[
[\Ao\setminus\{0,1\},G_\bullet]_s\cong
[\Ao\setminus\{0,1\},\exinf(G_\bullet)]_{\Ao}.
\qedhere\]
\end{proof}

The result implies that for split $G$ all the loops $C^\alpha(u,1-u)$,
$u\in k^\times$, described in \prettyref{sec:h2vsp1} are contractible
in the singular resolution $G(k[\Delta^\bullet])$: the symbol $C^\alpha(x,y)$ can be interpreted as a morphism of simplicial groups $\Gm\times\Gm\rightarrow\Omega \Sing G$. But since
$C^\alpha(1,y)=C^\alpha(x,1)=1$ is equal to the identity, this
morphism factors through a morphism of simplicial presheaves
$\Gm\wedge\Gm\rightarrow\Omega \Sing G$. The above corollary then
yields the Steinberg relation. Even better, since $\Sing G$ has affine excision, there is a single algebraic morphism $\Ao\setminus\{0,1\}\times\Ao\rightarrow G$ realizing all the Steinberg loops $C^\alpha(u,1-u)$, $u\in k^\times\setminus\{1\}$ at once; and there is a single algebraic homotopy $(\Ao\setminus\{0,1\}\times\Ao)\times\Ao\rightarrow G$ 
providing all the contractions of Steinberg loops at once. 
This is one instance where a computation in group homology can be deduced from $\Ao$-homotopy theory. 

We want to point out the following generalization of the Steinberg relation for non-split groups. Let $D$ be a central simple algebra over $k$. There is an associated reduced norm which can be interpreted as a regular morphism $\operatorname{Nrd}_D\colon\mathbb{A}^{\dim D}\rightarrow\Ao$. In $\mathbb{A}^{\dim D}$ we have two open subschemes, the linear algebraic group $\op{GL}_1(D)$ defined by $\op{Nrd}_D(u)\neq 0$, and another open subscheme $\mathcal{U}_D$ defined by $\op{Nrd}_D(u)\neq 0$ and $\op{Nrd}_D(1-u)\neq 0$. There is an obvious analogue of the Steinberg morphism 
\[
s_D\colon\mathcal{U}_D\rightarrow \op{GL}_1(D)\times \op{GL}_1(D)\rightarrow
\op{GL}_1(D)\wedge \op{GL}_1(D)\colon u\mapsto (u,1-u).
\]

\begin{proposition}
Let $s_D\colon\mathcal{U}_D\rightarrow \op{GL}_1(D)\wedge \op{GL}_1(D)$ be the Steinberg morphism defined above. Then there exists a space $\mathcal{X}_D$ and a commutative diagram 
\begin{center}
\begin{minipage}[c]{10cm}
\begin{tikzpicture}[scale=1.2,arrows=->]
\node (A01) at (0,1) {$\mathcal{U}_D$};
\node (A) at (0,0) {$\mathbb{A}^{\dim D}$};
\node (GmGm) at (3,1) {$\op{GL}_1(D)\wedge \op{GL}_1(D)$};
\node (X) at (3,0) {$\mathcal{X}_D$};
\draw (A01) to %[arrows=left hook->] %deactivated as it doesn't render anyway
 node [midway,left] {$\iota$} (A);
\draw (A) to node [midway,above] {$\tilde{s}_D$} (X);
\draw (A01) to node [midway,above] {$s_D$} (GmGm);
\draw (GmGm) to node [midway,right] {$\psi_D$} (X);
\end{tikzpicture}
\end{minipage}
\end{center}
with the suspension $\Sigma \psi_D$ of $\psi_D$ being an $\Ao$-local weak equivalence. 
\end{proposition}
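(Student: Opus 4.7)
The plan is to adapt the Hu--Kriz argument (cf.\ \prettyref{prop:hukriz}) to the present non-split setting. The structural role of $\Ao$ in their proof is played here by the ambient affine space $\mathbb{A}^{\dim D}$, which is equally $\Ao$-contractible; the role of $\Ao\setminus\{0,1\}$ is played by $\mathcal{U}_D$, and $\Gm\wedge\Gm$ is replaced by $GL_1(D)\wedge GL_1(D)$. The Steinberg formula $u\mapsto(u,1-u)$ transfers verbatim, and the open immersion $\iota$ is the analogue of $\Ao\setminus\{0,1\}\hookrightarrow\Ao$.

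Concretely, I would define $\mathcal{X}_D$ as the homotopy pushout, in pointed simplicial sheaves on $\smk$, of the diagram
\[
GL_1(D)\wedge GL_1(D) \xleftarrow{s_D} \mathcal{U}_D \xrightarrow{\iota} \mathbb{A}^{\dim D},
\]
with basepoints chosen compatibly starting from a rational point $u_0\in\mathcal{U}_D(k)$ (using an extension of scalars and descent if none exists). The universal structure maps of the pushout then produce $\psi_D$ and $\tilde{s}_D$, and the required commutative square holds by construction. Since $\mathbb{A}^{\dim D}$ is $\Ao$-contractible, $\mathcal{X}_D$ is $\Ao$-equivalent to the homotopy cofiber of $s_D$, and via the resulting Puppe sequence the assertion that $\Sigma\psi_D$ is an $\Ao$-local weak equivalence reduces to understanding the $\Ao$-homotopy cofiber of $\iota$ and verifying that the induced data behave after one simplicial suspension exactly as in Hu--Kriz.

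The hardest step is precisely this cofiber computation. In the original Hu--Kriz setting, $\{0,1\}\hookrightarrow\Ao$ is a smooth closed immersion with trivial normal bundle, and Morel--Voevodsky purity identifies $\Ao/(\Ao\setminus\{0,1\})\simeq\mathbb{P}^1\vee\mathbb{P}^1$, which drives the rest of the argument. For non-commutative $D$ the complement $\mathbb{A}^{\dim D}\setminus\mathcal{U}_D=\{\operatorname{Nrd}_D(u)\operatorname{Nrd}_D(1-u)=0\}$ is a union of two typically singular hypersurfaces, and classical purity does not apply directly. The main obstacle is to bypass this non-smoothness; a natural approach is to exploit the translation symmetry $u\mapsto 1-u$, which identifies $GL_1(D)\setminus\mathcal{U}_D$ with a translate of $\mathbb{A}^{\dim D}\setminus GL_1(D)$, and to combine it with a Mayer--Vietoris decomposition along the two hypersurfaces. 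This should reduce the problem to the single cofiber of $GL_1(D)\hookrightarrow\mathbb{A}^{\dim D}$, whose analysis is formally analogous to the Hu--Kriz computation of $\Ao/\Gm\simeq\mathbb{P}^1$ and feeds back into the pushout diagram to yield the desired $\Ao$-local equivalence $\Sigma\psi_D$.
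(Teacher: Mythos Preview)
Your construction of $\mathcal{X}_D$ is not the Hu--Kriz construction, and this is where the argument breaks down. You take $\mathcal{X}_D$ to be the homotopy pushout of $s_D$ and $\iota$; with that choice the square commutes for free, and the entire content of the proposition is pushed into the cofiber computation for $\iota:\mathcal{U}_D\hookrightarrow\mathbb{A}^{\dim D}$. You correctly observe that the complement $\{\operatorname{Nrd}_D(u)\operatorname{Nrd}_D(1-u)=0\}$ is singular, so Morel--Voevodsky purity does not apply, and then propose to reduce to the single cofiber $\mathbb{A}^{\dim D}/GL_1(D)$. But that cofiber has exactly the same problem: the complement $\{\operatorname{Nrd}_D=0\}$ is still a singular hypersurface, so the assertion that its analysis is ``formally analogous to $\Ao/\Gm\simeq\mathbb{P}^1$'' is precisely where the analogy fails. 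In the split case the complement is the smooth point $\{0\}$ and purity gives the answer; for nontrivial $D$ there is no such shortcut, and nothing in your outline bridges this gap.

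The paper's proof avoids cofibers and purity altogether by following what Hu--Kriz actually do: one introduces the auxiliary schemes
\[
\mathcal{V}_D=\{\,y-1=xz,\ \operatorname{Nrd}_D(y)\neq 0\,\},\qquad
\mathcal{W}_D=\{\,x-1=yz,\ \operatorname{Nrd}_D(x)\neq 0\,\},
\]
with the products taken in $D$, and sets $\mathcal{X}_D=\mathcal{V}_D\cup_{GL_1(D)\times GL_1(D)}\mathcal{W}_D$. These smooth schemes contain $GL_1(D)\times GL_1(D)$ and are each $\Ao$-weakly equivalent to a single copy of $GL_1(D)$ by explicit elementary homotopies (the same ones as in Hu--Kriz, now written in $D$). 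The map $\tilde{s}_D$ is given by an explicit formula landing in $\mathcal{V}_D$ (respectively $\mathcal{W}_D$), and the assertion that $\Sigma\psi_D$ is an $\Ao$-equivalence follows from the suspension splitting of the product $GL_1(D)\times GL_1(D)$ into smash summands, exactly as in the split case. No statement about the singular norm hypersurface is ever needed. I would recommend going back to the Hu--Kriz paper and transporting their \emph{explicit} construction rather than the abstract homotopy pushout.
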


\begin{proof}
The argument is the same as in \cite[Prop. 1]{hu:kriz}, replacing $\Ao$ by $\mathbb{A}^{\dim D}$, $\Gm$ by $\op{GL}_1(D)$, and $\Ao\setminus\{0,1\}$ by $\mathcal{U}_D$. The varieties $V$ and $W$
have to be replaced by $\mathcal{V}_D=[y-1=x\cdot_Dz,y\neq 0]$ and $\mathcal{W}_D=[x-1=y\cdot_Dz,x\neq 0]$. The space $\mathcal{X}_D$ is then the pushout $\mathcal{V}_D\cup_{\op{GL}_1(D)\times \op{GL}_1(D)}\mathcal{W}_D$.
\end{proof}

This provides an $\Ao$-homotopy proof of the Steinberg relation in $\op{H}_2(\op{SL}_n(D),\ZZ)$, $n\geq 3$. All Steinberg relations are given by a single algebraic map $\mathcal{U}_D\times\Ao\rightarrow \op{SL}_n(D)$, and they are all contracted by a single (inexplicit) algebraic homotopy $(\mathcal{U}_D\times\Ao)\times\Ao\rightarrow \op{SL}_n(D)$.


\begin{thebibliography}{CTO92}

\bibitem[AHW1]{gbun1}
A. Asok and M. Hoyois and M. Wendt. Affine representability results in {$\Ao$}-homotopy theory I: vector bundles.
ArXiV preprint 2015, arXiv:1506.07093.

\bibitem[AHW2]{gbun2}
A. Asok and M. Hoyois and M. Wendt. Affine representability results in {$\Ao$}-homotopy theory II: principal bundles and homogeneous spaces.
ArXiV preprint 2015, arXiv:1507.08020.

\bibitem[GJ99]{goerss:jardine}
P.G. Goerss and J.F. Jardine.
Simplicial Homotopy Theory. Progress in Mathematics 174. Birkh\"auser
(1999). 

\bibitem[HK01]{hu:kriz}
P. Hu and I. Kriz. The Steinberg relation in $\Ao$-stable
homotopy. Internat. Math. Res. Notices 2001, no. 17, 907--912. 

\bibitem[Jar83]{jardine:homotopy}
J.F. Jardine. On the homotopy groups of algebraic groups.
J. Algebra 81 (1983), no. 1, 180--201. 

\bibitem[LS11]{luzgarev:stavrova}
A.Yu. Luzgarev and A.A. Stavrova. The elementary subgroup of an
isotropic reductive group is perfect. Algebra i Analiz 23 (2011),
no. 5, 140--154; translation in St. Petersburg Math. J. 23 (2012),
no. 5, 881--890. 

\bibitem[Mat69]{matsumoto}
H. Matsumoto. Sur les sous-groupes arithm{\'e}tiques des groupes
semi-simples d{\'e}ploy{\'e}s. Ann. Sci. {\'E}cole Norm. Sup. (4) 2
(1969), 1--62.

\bibitem[Mor12]{morel:a1algtop}
F. Morel. $\Ao$-algebraic topology over a field. Lecture Notes in Mathematics 2052, Springer, 2012. 

\bibitem[PS09]{petrov:stavrova:elementary}
V.A. Petrov and A. Stavrova. Elementary subgroups in isotropic
reductive groups. Algebra i Analiz 20 (2008), no. 4, 160--188;
translation in St. Petersburg Math. J. 20 (2009), no. 4, 625--644.

\bibitem[Reh78]{rehmann}
U. Rehmann. Zentrale Erweiterungen der speziellen linearen Gruppe eines Schiefk{\"o}rpers. J. Reine Angew. Math. 301 (1978), 77--104.

\bibitem[Sta11]{stavrova}
A. Stavrova. Homotopy invariance of non-stable {$K_1$}-functors. J. K-Theory 13 (2014), no. 2, 199--248.

\bibitem[Voe11]{voelkel:diplom}
K. Voelkel. Matsumotos Satz und $\Ao$-Homotopietheorie. Diplomarbeit,
Albert-Ludwigs-Universit\"at Freiburg, 2011.

\bibitem[Wen10]{chev-rep}
M. Wendt. $\Ao$-homotopy of Chevalley groups. 
J. K-Theory 5 (2010), no. 2, 245--287. 

\bibitem[Wen11]{hoinv}
M. Wendt. On homology of linear groups over {$k[t]$}.
Mathematical Research Letters 21 (2014), no. 6, 1483--1500.

\end{thebibliography}
\end{document}